\documentclass[11pt]{article}
\usepackage{amsmath, amssymb, amsfonts, amstext, amsthm, textcomp, enumerate}
\usepackage[mathscr]{euscript}
\usepackage{float}
\usepackage{booktabs}
\usepackage{mathtools}
\usepackage[left=20mm,top=0.5in,bottom=15mm]{geometry}
\usepackage{graphicx}
\usepackage{caption}
\usepackage{epstopdf}
\usepackage{longtable}
\usepackage[utf8]{inputenc}
\usepackage{xcolor}
\usepackage{hyperref}
\usepackage{graphicx}
\usepackage{dcolumn}
\usepackage{bm}
\usepackage{epstopdf}
\usepackage[english]{babel}
\usepackage{subfigure}
\usepackage{color}
\usepackage{ulem}

\newtheorem{thm}{Theorem}[section]
\newtheorem{lem}[thm]{Lemma}

\newtheorem{ex}[thm]{Example}

\newfont{\bb}{msbm10}

\begin{document}
	\title{ Determinant Bounds for $(n-1)$-Locally Positive Semidefinite Matrices}
      \author{Shaun Fallat\thanks{Department of Mathematics and Statistics,
University of Regina, Regina, SK, Canada 
(sfallat@uregina.ca).}
    \and  Samir Mondal\thanks{Department of Mathematics and Statistics,
University of Regina, Regina, SK, Canada (isamirmondal@gmail.com)}
  \and Hristo Sendov\thanks{Department of Statistical and Actuarial Sciences, Department of Mathematics, The University of Western Ontario, London, Ontario, Canada (hsendov@uwo.ca)} 
  }
  \date{}
 \maketitle
\begin{abstract}
Semidefinite programming relaxations based on $k$-locally positive semidefinite constraints provide tractable approximations to many nonconvex discrete and continuous optimization problems. In this framework, the extremal case $k = n-1$ corresponds to the tightest nontrivial relaxation in this hierarchy, in which every principal submatrix of size $n-1$ is constrained to be positive semidefinite, while the global positive semidefiniteness condition is governed by the determinant.  

In this paper, we study the determinants of the $(n-1)$-locally positive semidefinite matrices and derive sharp lower bounds on their determinants that quantify the gap between local and global positive semidefiniteness. Our main result establishes an extension of Hadamard’s determinantal inequality, showing that for any such matrix $A=(a_{ij})$,
\[
\det(A) \ge -\frac{1}{n-2}\left(\frac{n-1}{n-2}\right)^{n-1} a_{11}\cdots a_{nn}.
\]
We further obtain analogous extensions of classical determinant inequalities, including Fisher’s and Koteljanskii’s inequalities, providing tight lower bounds in each case.  In an sense, these results quantify, via determinant bounds, how far the class of $(n-1)$-locally positive
semidefinite matrices can be from being positive semidefinite. In particular, we show
that although global positive semidefiniteness may fail, the determinant is tightly controlled, offering new insight into the quality of such relaxations.

\end{abstract}

\noindent Keywords: Determinants, Hadamard's inequality, Fischer's inequality, Koteljanskii's inequality, positive semidefinite matrices, locally positive semidefinite matrices.

\noindent AMS-MSC: 15A15, 15A18; 15B48

\vspace{.5cm}
\section{Introduction} 

Semidefinite programming (SDP) relaxations are a fundamental tool for obtaining tractable approximations and dual bounds for a wide range of discrete and continuous non-convex optimization problems \cite{wolkowicz2012handbook}. Many applications in areas such as power systems, control, and signal processing can be formulated or relaxed as an SDP, where the decision variable is constrained to lie in the cone of symmetric positive semidefinite (PSD) matrices
\[
S_n^+: = \{X \in \mathcal{S}^n \mid x^T X x \geq 0,\; \forall x \in \mathbb{R}^n\}.
\]
{Here $\mathcal{S}^n$ denotes the set of $n \times n$ symmetric matrices.
The cone of symmetric positive definite (PD) matrices is
\[
S_n^{++} := \{X \in \mathcal{S}^n \mid x^T X x > 0,\; \forall x \in \mathbb{R}^n, x \not=0\}.
\]}
Although SDP problems are indeed solvable in polynomial time, enforcing the global PSD constraint $X \in S_n^+$ becomes computationally challenging for large-scale problems. To address this issue, a widely used approach is to relax the global constraint by requiring only certain principal submatrices be PSD. This leads to the family of cones
\begin{align*}
S^+_{n,k} &:= \{X \in \mathcal{S}^n \mid \text{all } k \times k \text{ principal submatrices of } X \text{ are PSD}\}, 
\end{align*}
which form a hierarchy satisfying $S_n^+ \subseteq S^+_{n,k}$, with equality when $k = n$.  These relaxations are computationally attractive and have been successfully applied in practice to problems such as quadratically constrained quadratic programs and optimal power flow \cite{baltean2018selecting, dey2019sparse, qualizza2012linear, kocuk2016strong, sojoudi2014exactness}. 

Understanding the quality of these relaxations is therefore of both practical and theoretical interest. Prior work has studied the cones $S^+_{n,k}$, for various $k$, from both a geometric and a spectral perspective. In particular, \cite{blekherman2022sparse} analyzes their distance to the PSD cone, showing that the approximation improves as $k$ increases and becomes exact when $k=n$, while for small values of $k$ the distance can remain bounded away from zero. Complementarily, \cite{Blekherman2022} shows that the eigenvalues of matrices in $S^+_{n,k}$ lie in the convex cone $H(e_k^n)$, the hyperbolicity cone of the elementary symmetric polynomial $e_k^n.$

In this paper, we study the extremal case $k=n-1$, which represents the tightest nontrivial relaxation in this hierarchy. In this regime, every principal minor of a matrix is constrained except possibly the determinant, so the gap between $S^+_{n,n-1}$ and $S_n^+$ is entirely governed by the determinant. This observation raises a natural question: How large can the violation of the global positivity condition be, given that all local PSD constraints are satisfied? We resolve this question by establishing a quantitative lower bound on the determinant, see Theorem~\ref{2026-04-16-thm1}

For our purposes, we need to modify the definition of the set $S^+_{n,n-1}$. So, for the remainder of the paper, we define
{
\begin{align*}
S^+_{n,n-1} &:= \{X \in \mathcal{S}^n \mid \text{all principal submatrices of size $n-1$ are PSD and } \det(X) < 0\}.
\end{align*}
Let $S^{++}_{n,n-1}$ be the analogous set with PSD replaced by PD.}
We refer to $S^+_{n,n-1}$ as the cone of $(n-1)$-locally positive semidefinite matrices and 
to $S^{++}_{n,n-1}$ as the cone $(n-1)$-locally positive definite matrices.

We are also motivated by the following classical determinant inequalities, see Section~\ref{sec:notation} for notation.
For any matrix $A= (a_{ij}) \in S_n^+$, the following inequalities hold:
\begin{itemize}
    \item \textbf{Hadamard's inequality:} 
    \begin{equation}
\label{eq:hadamard}
\det(A)\leq a_{11}\cdots a_{nn}.
\end{equation}
    \item \textbf{Fisher's inequality:} for any $\alpha \subseteq \{1,\ldots,n\}$,
    \begin{equation}
    \label{eq:fisher}
    \det(A)\leq \det(A[\alpha]) \det(A[\alpha^c]).
    \end{equation}
    
    \item \textbf{Koteljanskii's inequality:} for any $\alpha,\beta \subseteq \{1,\ldots,n\}$,
    \begin{equation}
    \label{eq:koteljanskii}
    \det(A[\alpha \cup \beta]) \det(A[\alpha \cap \beta])
    \leq \det(A[\alpha]) \det(A[\beta]).
    \end{equation}
\end{itemize}

Our main results give a sharp lower bound on the determinant of matrices in $S^+_{n,n-1}$. 

\begin{thm}[Extended Hadamard's inequality]
\label{2026-04-16-thm1}
Let $A \in S^+_{n,n-1}$, with $n \ge 3$. Then
\[
\det(A)\geq -\frac{1}{n-2}\left(\frac{n-1}{n-2}\right)^{n-1}a_{11}\cdots a_{nn}.
\]
{The inequality is strict when $A \in S^{++}_{n,n-1}$.} 
\end{thm}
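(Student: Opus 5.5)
The plan is to reduce to unit diagonal, bound the single negative eigenvalue of $A$, and then apply AM--GM to the remaining eigenvalues. The extremal matrix should be $I-s(J-I)$ with $s=1/(n-2)$, where $J$ is the all-ones matrix. Its eigenvalues are $1+s=\frac{n-1}{n-2}$ with multiplicity $n-1$ and $1-(n-1)s=-\frac{1}{n-2}$, so its determinant is exactly the bound in Theorem~\ref{2026-04-16-thm1}.

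\textbf{Step 1: normalization.} Let $A\in S^+_{n,n-1}$. Every diagonal entry satisfies $a_{ii}\ge 0$, because it lies in some PSD principal submatrix of size $n-1$. Suppose $a_{ii}=0$. For each $j\neq i$, the $2\times 2$ principal submatrix on $\{i,j\}$ lies inside a PSD submatrix of size $n-1$ (this uses $n\ge 3$). Hence $a_{ij}=0$ for all $j$, so $\det(A)=0$, contradicting $\det(A)<0$. Therefore $D=\mathrm{diag}(a_{11},\dots,a_{nn})$ is positive definite. The matrix $D^{-1/2}AD^{-1/2}$ is again in $S^+_{n,n-1}$ (or in $S^{++}_{n,n-1}$ when $A$ is), has unit diagonal, and has determinant $\det(A)/(a_{11}\cdots a_{nn})$. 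It therefore suffices to prove that $\det(A)\ge -c_n$, where $c_n=\frac{1}{n-2}\left(\frac{n-1}{n-2}\right)^{n-1}$, under the assumption $a_{ii}=1$ for all $i$.

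\textbf{Step 2: inertia.} Let $\lambda_1\ge\cdots\ge\lambda_n$ be the eigenvalues of $A$. By Cauchy interlacing with a PSD submatrix of size $n-1$, we get $\lambda_{n-1}\ge 0$. Since $\det(A)<0$, it follows that $\lambda_n=-\mu<0$ and $\lambda_1,\dots,\lambda_{n-1}>0$. Because the trace is $n$, we have $\sum_{i<n}\lambda_i=n+\mu$. AM--GM then gives
\[
-\det(A)=\mu\prod_{i<n}\lambda_i\le \mu\left(\frac{n+\mu}{n-1}\right)^{n-1}.
\]
The right-hand side is increasing in $\mu$. At $\mu=\frac1{n-2}$ one checks that $\frac{n+\mu}{n-1}=\frac{n-1}{n-2}$, so the right-hand side equals $c_n$. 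The whole proof therefore reduces to showing $\mu\le \frac{1}{n-2}$.

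\textbf{Step 3: the key eigenvalue bound (main obstacle).} Let $v$ be a unit eigenvector with $Av=-\mu v$, and pick an index $i$ with $v_i^2\le 1/n$. Such an index exists because $\sum_j v_j^2=1$. Set $x=v-v_ie_i$. Since $x$ is supported off coordinate $i$, the local PSD hypothesis gives $x^TAx\ge 0$. Expanding with $a_{ii}=1$ and $(Av)_i=-\mu v_i$ gives
\[
x^TAx=-\mu+2\mu v_i^2+v_i^2\ge 0,
\qquad\text{so}\qquad
\mu(1-2v_i^2)\le v_i^2.
\]
Since $v_i^2\le 1/n<1/2$, this yields $\mu\le \frac{v_i^2}{1-2v_i^2}\le\frac{1/n}{1-2/n}=\frac{1}{n-2}$. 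Here we used that $t\mapsto t/(1-2t)$ is increasing on $[0,1/2)$.

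\textbf{Step 4: strictness.} Suppose $A\in S^{++}_{n,n-1}$. The vector $x$ in Step 3 is nonzero: otherwise $v=\pm e_i$, and then $-\mu=v^TAv=a_{ii}=1$, which is impossible. Hence $x^TAx>0$, the inequality in Step 3 is strict, and so $\mu<\frac1{n-2}$. Because the bound in Step 2 is strictly increasing in $\mu$, this gives $\det(A)>-c_n$. Undoing the scaling from Step 1 completes the proof.
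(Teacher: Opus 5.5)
Your proof is correct. Its outer structure matches the paper's: normalize to unit diagonal, show there is a single negative eigenvalue $-\mu$, prove $\mu\le\frac{1}{n-2}$, and conclude with AM--GM on the positive eigenvalues (which sum to $n+\mu$) and the monotonicity of $\mu\bigl(\frac{n+\mu}{n-1}\bigr)^{n-1}$.

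The genuine difference is how you bound $\mu$. The paper argues spectrally. The characteristic-polynomial coefficient $e_{n-1}(\lambda)=\sum_i\det A(i)$ is positive, and writing it as $\det(A)\sum_i\lambda_i^{-1}$ gives $\sum_{\lambda_i>0}\lambda_i^{-1}<1/\mu$. This is then played against the Cauchy--Schwarz bound $\sum_{\lambda_i>0}\lambda_i^{-1}\ge (n-1)^2/(n+\mu)$. You instead test the single submatrix $A(i)$ against the vector $x$ obtained from the negative eigenvector by deleting its smallest coordinate.

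Your argument needs only $x^TAx\ge 0$, so it works directly on $S^+_{n,n-1}$ and avoids the paper's perturbation $A+\varepsilon I$ and limiting step. Strictness on $S^{++}_{n,n-1}$ then follows at once from $x\neq 0$. The price is that you must verify by hand that the diagonal is positive in the semidefinite case, which you do correctly using $n\ge 3$; the paper gets this for free from the perturbation.

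Conversely, the paper's route uses the local hypothesis only through the inertia and the one scalar inequality $\sum_i\det A(i)>0$. It therefore establishes a purely spectral fact: a real vector with one negative entry, all other entries positive, coordinate sum $n$, and $e_{n-1}>0$ has its negative entry above $-\frac{1}{n-2}$. That weaker hypothesis fits the hyperbolicity-cone viewpoint cited in the introduction. Your test-vector argument is more elementary and more directly tied to the matrix structure.
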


\begin{thm}[Extended Fisher's inequality]\label{2026-04-04-fisher}
Let $A \in S^+_{n,n-1}$, with $n \ge 3$, and let $\alpha \subseteq \{1,\ldots,n\}$. Then
\[
\det(A) \ge -\frac{1}{n-2}\left(\frac{n-1}{n-2}\right)^{n-1}
\det (A[\alpha])\det (A[\alpha^c]).
\]
{The inequality is strict when $A \in S^{++}_{n,n-1}$.} 
\end{thm}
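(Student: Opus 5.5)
The plan is to reduce Theorem~\ref{2026-04-04-fisher} to Theorem~\ref{2026-04-16-thm1}, using a Jacobi-type identity and a variational argument over the off-diagonal block. Write $c_n=\frac{1}{n-2}\left(\frac{n-1}{n-2}\right)^{n-1}$ and note that $c_n>1$ for $n\ge 3$.

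\textbf{Trivial cases.} If $\alpha=\emptyset$ or $\alpha=\{1,\ldots,n\}$, the right-hand side is $-c_n\det(A)$. Since $\det(A)<0$ and $c_n>1$, we get $\det(A)> -c_n\det(A)$ strictly.

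\textbf{Reduction to the positive definite case.} Assume $\alpha$ is a proper nonempty subset. Then $A[\alpha]$ and $A[\alpha^c]$ have size at most $n-1$, so both are PSD. If one of them is singular, the right-hand side is $0>\det(A)$, and the inequality holds strictly. So we may assume both blocks are PD. Write
\[
A=\begin{pmatrix} P & C\\ C^T & Q\end{pmatrix},\qquad P=A[\alpha],\ Q=A[\alpha^c].
\]
Then
\[
\frac{\det(A)}{\det(P)\det(Q)}=\det(I-KK^T),\qquad K=P^{-1/2}CQ^{-1/2}.
\]
This ratio is invariant under $A\mapsto DAD$ for positive diagonal $D$, and that map preserves $S^+_{n,n-1}$ and $S^{++}_{n,n-1}$. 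Jacobi's identity gives $\det(A[\alpha])=\det(A)\det(A^{-1}[\alpha^c])$ and the analogous formula for $\alpha^c$. Hence the Fisher ratio of $A$ equals the Fisher ratio of $A^{-1}$. I will keep this identity available to move between $A$ and $A^{-1}$, since the diagonal of $A^{-1}$ encodes the $(n-1)$-minors, namely $(A^{-1})_{ii}=\det(A[\{i\}^c])/\det(A)\le 0$.

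\textbf{Main step.} With $P$ and $Q$ fixed, minimize $\det(A)$, equivalently $\det(I-KK^T)$, over all $C$ such that every $(n-1)\times(n-1)$ principal submatrix of $A$ stays PSD. Since $\det(A)<0$ while all $(n-1)$-minors are nonnegative, $I-KK^T$ has exactly one negative eigenvalue. I will then argue the following.
\begin{enumerate}[(i)]
\item At a minimizer the local constraints are active, so every $(n-1)$-minor of $A$ vanishes, i.e.\ $\operatorname{diag}(A^{-1})=0$. Otherwise $C$ can be perturbed along the negative eigendirection to decrease $\det(A)$.
\item In that extremal configuration, one can choose orthonormal bases of the $\alpha$ and $\alpha^c$ coordinates in which $P$ and $Q$ become diagonal while the resulting matrix still lies in the closure of $S^+_{n,n-1}$.
\end{enumerate}
Theorem~\ref{2026-04-16-thm1} applied to that matrix then gives
\[
\det(A)\ge -c_n\cdot(\text{product of diagonal entries})=-c_n\det(P)\det(Q).
\]
Strictness for $A\in S^{++}_{n,n-1}$ comes from the fact that in this case no $(n-1)$-minor vanishes, so $A$ is not extremal. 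Alternatively, it follows from the strict part of Theorem~\ref{2026-04-16-thm1}.

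\textbf{Expected obstacle.} The hard step is (ii). A block-diagonal orthogonal congruence $\operatorname{diag}(U_1,U_2)$ is not a principal-submatrix-preserving operation, so in general it does not preserve membership in $S^+_{n,n-1}$. I would first try to show that at the extremal point the matrix $A^{-1}$ has a rank-structured form: its diagonal is zero and it has a single positive eigenvalue. For such a form the local conditions reduce to conditions invariant under this congruence. If that fails, the fallback is to analyze the extremal problem directly in the normalized form $P=I$, $Q=I$, $A=\begin{pmatrix} I & K\\ K^T & I\end{pmatrix}$. Here the $(n-1)$-minors are explicit in terms of the rows and columns of $K$, and one reduces via the singular values of $K$ to the same one-parameter optimization that produces the constant $c_n$ in Theorem~\ref{2026-04-16-thm1}.
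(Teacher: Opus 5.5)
\textbf{There is a genuine gap, and it cannot be closed: the statement is false.} Two sign errors in your case analysis hide the counterexamples.

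\emph{The trivial cases.} For $\alpha=\emptyset$ (or $\alpha=N$), the right-hand side is $-c_n\det(A)=c_n|\det(A)|>0>\det(A)$. So the inequality fails for \emph{every} $A\in S^+_{n,n-1}$; it does not hold strictly.

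\emph{The singular-block case.} Here the right-hand side is $0$. Since $0>\det(A)$, the inequality fails rather than holds. Such blocks do occur. The limiting matrix of Example~\ref{2026-04-04-ex} (ones on the diagonal, $-\frac{1}{n-2}$ elsewhere) lies in $S^+_{n,n-1}$ and has determinant $-c_n$. All of its $(n-1)\times(n-1)$ principal minors vanish, so for $\alpha=\{n\}$ the claim reads $-c_n\ge 0$.

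\emph{The strict version.} It fails too. Take $A(r)\in S^{++}_{n,n-1}$ from the same example and $\alpha=\{n\}$. Then
\[
\frac{\det(A(r))}{\det(A(r)[\alpha])\,\det(A(r)[\alpha^c])}
=\frac{(1-r)\bigl(1+(n-1)r\bigr)}{1+(n-2)r}\longrightarrow-\infty
\quad\text{as } r\downarrow -\frac{1}{n-2},
\]
so no finite constant can replace $c_n$. Concretely, $n=3$ and $r=-0.9$ give $\det(A)=-2.888$, while $-4\det(A[\{1,2\}])\det(A[\{3\}])=-4(0.19)=-0.76$.

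\emph{Your step (ii) is exactly the paper's argument.} The paper writes $A[\alpha]=U\Lambda U^T$ and $A[\alpha^c]=V\Gamma V^T$. It then applies Theorem~\ref{2026-04-04-thm1} to $W^TAW$ with $W=U\oplus V$, without ever checking that $W^TAW\in S^{++}_{n,n-1}$. You correctly flagged that a block-orthogonal congruence does not preserve this class, and this is precisely where the argument breaks. In the $n=3$, $r=-0.9$ example, diagonalizing $A[\{1,2\}]$ (eigenvalues $0.1$ and $1.9$) gives
\[
W^TAW=\begin{pmatrix} 0.1 & 0 & -0.9\sqrt{2}\\ 0 & 1.9 & 0\\ -0.9\sqrt{2} & 0 & 1\end{pmatrix}.
\]
Its principal minor on indices $\{1,3\}$ is $0.1-1.62<0$, so Theorem~\ref{2026-04-04-thm1} does not apply to it.

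\emph{The structural reason.} Theorem~\ref{2026-04-04-thm1} controls $\det(A)$ only relative to $a_{11}\cdots a_{nn}$. In the PSD case, a nearly singular block forces $\det(A)$ to be small. Here, $\det(A)$ can stay near $-c_n$ while $\det(A[\alpha^c])\to 0$. Consequently neither your variational step (i) nor the singular-value fallback can produce the constant $c_n$. Any correct statement needs a different right-hand side or additional hypotheses on the blocks.
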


\begin{thm}[Extended Koteljanskii's inequality]\label{koteljanskii}
Let $A \in \mathcal{S}^n$, and let $\alpha, \beta \subseteq \{1,\dots,n\}$, with $r:=|(\alpha \cup \beta) \setminus (\alpha \cap \beta)| \ge 3$ {and $m:=|\alpha \cup \beta|$. If $A[\alpha \cup \beta] \in S^+_{m,m-1}$}, then
\begin{equation*}
\label{eq:ext-koteljanskii}
\det (A[\alpha \cup \beta]) \det (A[\alpha \cap \beta])
\ge 
-\frac{1}{r-2}\left(\frac{r-1}{r-2}\right)^{r-1}
\det (A[\alpha]) \det (A[\beta]).
\end{equation*}
{The inequality is strict when $A[\alpha \cup \beta] \in S^{++}_{m,m-1}$.} 
\end{thm}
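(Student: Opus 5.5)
The plan is to reduce the statement to Theorem~\ref{2026-04-04-fisher} (Extended Fisher) by taking a Schur complement with respect to the common block $\gamma:=\alpha\cap\beta$. Write $M:=A[\alpha\cup\beta]\in S^+_{m,m-1}$, and set $\alpha':=\alpha\setminus\gamma$ and $\beta':=\beta\setminus\gamma$, so that $\alpha'\cup\beta'=(\alpha\cup\beta)\setminus\gamma$ has exactly $r$ elements. Throughout I take $\alpha'$ and $\beta'$ both nonempty, i.e.\ $\alpha,\beta$ not nested, as is implicit in the statement. In the nested case, say $\alpha\subseteq\beta$, the inequality reads $(1+c)\det A[\alpha]\det A[\beta]\ge 0$ with $\det A[\beta]<0$, where $c=\frac{1}{r-2}\bigl(\frac{r-1}{r-2}\bigr)^{r-1}$. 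So in the nested case the statement only holds when $\det A[\alpha]=0$, which should be flagged. Since $r\ge 3$, $\gamma$ is a proper subset of $\alpha\cup\beta$ of size at most $m-3$, and hence $A[\gamma]$ is PSD. Likewise $\alpha$ and $\beta$ are proper subsets, so $A[\alpha]$ and $A[\beta]$ are PSD and $\det A[\alpha],\det A[\beta]\ge 0$.

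\emph{Degenerate case.} If $\det A[\gamma]=0$, the left side is $0$ and the right side is $-c\det A[\alpha]\det A[\beta]\le 0$, so the inequality holds. This case cannot occur when $M\in S^{++}_{m,m-1}$, because then $A[\gamma]$ is positive definite. So it does not interfere with the strictness claim.

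\emph{Main case.} Assume $A[\gamma]$ is positive definite, and let $B:=M/A[\gamma]$ be the Schur complement, an $r\times r$ symmetric matrix indexed by $\alpha'\cup\beta'$. By the Schur determinant formula, and the fact that a principal submatrix of a Schur complement is the Schur complement of the corresponding principal submatrix containing $\gamma$, we get
\[
\det M=\det A[\gamma]\det B,\qquad \det A[\alpha]=\det A[\gamma]\det B[\alpha'],\qquad \det A[\beta]=\det A[\gamma]\det B[\beta'].
\]
Next I check that $B\in S^+_{r,r-1}$. For any $(r-1)$-subset $\delta\subset\alpha'\cup\beta'$, the matrix $B[\delta]$ is the Schur complement of $M[\delta\cup\gamma]$ with respect to $A[\gamma]$. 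Since $M[\delta\cup\gamma]$ is an $(m-1)\times(m-1)$ principal submatrix of $M$, it is PSD, and the Schur complement of a PSD matrix with respect to a PD block is PSD. Moreover $\det B=\det M/\det A[\gamma]<0$. In the $S^{++}$ case the same argument gives $B\in S^{++}_{r,r-1}$.

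\emph{Conclusion.} Apply Theorem~\ref{2026-04-04-fisher} to $B$ (with $r\ge 3$) and the index set $\alpha'$, whose complement in $\alpha'\cup\beta'$ is $\beta'$:
\[
\det B\ge -c\,\det B[\alpha']\det B[\beta'].
\]
Multiplying by $\det A[\gamma]^2>0$ and substituting the three identities above gives exactly the claimed inequality, and the strict version follows from the strict version of Theorem~\ref{2026-04-04-fisher}. The only step needing care is the verification that $B\in S^+_{r,r-1}$, namely the compatibility of principal submatrices with Schur complements. This is routine block algebra, so I expect no serious obstacle beyond the bookkeeping of the degenerate and nested cases.
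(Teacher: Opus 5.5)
Your reduction follows the paper's own proof. You Schur-complement out $A[\gamma]$ and use the quotient identities to get $\det A[\alpha]=\det A[\gamma]\det B[\alpha']$ and its companions. You then check that $B$ is $(r-1)$-locally PSD with $\det B<0$ and invoke the Extended Fisher inequality (Theorem~\ref{2026-04-04-fisher}) for $B$. Handling the semidefinite case directly (the degenerate case $\det A[\gamma]=0$, otherwise PSD Schur complements) is a valid substitute for the paper's $A+\varepsilon I$ perturbation. Your remark on nested $\alpha,\beta$ is correct, and the paper misses it: $\alpha=\emptyset$ turns the claim into $(1+c)\det A[\beta]\ge 0$, which is false. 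The genuine gap is the step you treat as a black box. Theorem~\ref{2026-04-04-fisher} is false, and so is the statement you are proving. It already fails for disjoint, non-nested $\alpha,\beta$, which is inside your restricted setting.

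\textbf{Counterexample.} Take $n=3$, $\alpha=\{1,2\}$, $\beta=\{3\}$, so $\alpha\cap\beta=\emptyset$, $r=m=3$, and the constant is $4$. For $\tfrac12<x<1$ let $A_x=(1+x)I_3-xJ_3$, the family of Example~\ref{2026-04-04-ex}. Its $2\times 2$ principal minors all equal $1-x^2>0$ and $\det A_x=(1+x)^2(1-2x)<0$, so $A_x\in S^{++}_{3,2}$. The claimed inequality $(1+x)^2(1-2x)>-4(1-x^2)$ is equivalent to $(1+x)(1-2x)/(1-x)>-4$, and the left side tends to $-\infty$ as $x\to 1^-$. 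For instance, at $x=0.9$ one has $\det A_x=-2.888$ against the bound $-0.76$. At $x=1$, the extremal matrix of Example~\ref{2026-04-04-ex} lies in $S^{+}_{3,2}$ and gives left side $-4$ but right side $0$. The same happens for every $n\ge 3$ with $a_{ij}=-\frac{1}{n-2}$ and $|\alpha|=n-1$, since then $\det A[\alpha]=0$. So no finite constant can work: $\det A[\alpha]$ can tend to $0$ while $\det A[\alpha\cup\beta]$ stays bounded away from $0$.

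\textbf{Where the paper's Fisher proof fails.} It applies the Hadamard-type bound to $W^TAW$ with $W=U\oplus V$, but $W^TAW$ need not lie in $S^{++}_{n,n-1}$. Its principal submatrices are compressions of $A$ to non-coordinate subspaces. For $A_{0.9}$ it contains the indefinite block $\begin{pmatrix}0.1&-1.8/\sqrt2\\-1.8/\sqrt2&1\end{pmatrix}$. Your Schur-complement bookkeeping is fine, but it reduces the problem to a false lemma. The argument cannot be completed without changing the statement.
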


In addition, in the following theorem, we give a lower bound on the determinant of matrices in a related class defined by positive semidefinite constraints on their leading principal submatrices. Such matrices have long been studied. For instance, in the symmetric case, positivity of all leading principal minors characterizes positive definiteness. More recently, matrices with prescribed constraints on leading principal minors have appeared in the context of Cholesky factorization (see \cite{KV}).

\begin{thm}
Let
\begin{align*}
A=\begin{pmatrix}
B & b\\
b^T & a_{nn}
\end{pmatrix}\in\mathcal S^n~~
\mbox{with $n\geq 3$}, 
\end{align*}
where $B$ is an $(n-1) \times (n-1)$ positive semidefinite matrix. Assume that
\begin{align}\label{conditions}
{\det(A) \le 0, \,\,\, a_{nn} \ge 0, \text{ and } \,\,
|b_i|\le \sqrt{a_{ii}a_{nn}},\quad i=1,\dots,n-1.}
\end{align}
Then
\begin{align*}
\det(A)\ge
-(n-1)\left(\frac{n-1}{n-2}\right)^{n-2}\,a_{11}\cdots a_{nn}.
\end{align*}
The inequality is strict when $B$ is positive definite and \eqref{conditions} holds with $a_{nn} >0$.
\end{thm}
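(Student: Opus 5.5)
The plan is to expand the determinant along the last row and column, normalize the diagonal to ones, and then bound a single quadratic form in $\operatorname{adj}(B)$ by AM--GM on the eigenvalues of $B$. The starting point is the Schur/adjugate expansion
\[
\det(A)=a_{nn}\det(B)-b^T\operatorname{adj}(B)\,b ,
\]
which is valid for every $B$, singular or not.

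\textbf{Degenerate cases.} If $a_{nn}=0$, then $|b_i|\le\sqrt{a_{ii}a_{nn}}$ forces $b=0$. Hence $\det(A)=0$, and the bound holds because its right-hand side is $\le 0$.

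If some $a_{ii}=0$ with $i\le n-1$, I replace $B$ by $B+\varepsilon I$ with $\varepsilon>0$. This keeps $B$ positive semidefinite and keeps the constraints $|b_i|\le\sqrt{a_{ii}a_{nn}}$, since each $a_{ii}$ only increases. It may destroy $\det(A)\le 0$, but that hypothesis is not used in the argument below. Then I let $\varepsilon\to0$, using continuity of both sides.

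So assume all $a_{ii}>0$. Conjugate by $D=\operatorname{diag}(a_{11}^{-1/2},\dots,a_{nn}^{-1/2})$, i.e. pass to $DAD$. This multiplies $\det(A)$ by $(a_{11}\cdots a_{nn})^{-1}$ and preserves all hypotheses. We may therefore assume $a_{ii}=1$ for all $i$. Then $|b_i|\le1$, so $\|b\|^2\le n-1$, and $\operatorname{tr}(B)=n-1$.

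\textbf{Main estimate.} Since $B\succeq0$, we have $\det(B)\ge0$, so
\[
\det(A)\ge -\,b^T\operatorname{adj}(B)\,b\ \ge\ -\|b\|^2\,\lambda_{\max}(\operatorname{adj}(B))\ \ge\ -(n-1)\,\lambda_{\max}(\operatorname{adj}(B)).
\]
Let $\lambda_1,\dots,\lambda_{n-1}\ge0$ be the eigenvalues of $B$. Then $\operatorname{adj}(B)$ is diagonalized by the same orthogonal matrix, with eigenvalues $\prod_{j\ne i}\lambda_j$. This holds also for singular $B$, by continuity of the adjugate.

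Each such product has $n-2$ nonnegative factors whose sum is at most $\operatorname{tr}(B)=n-1$. By AM--GM,
\[
\prod_{j\ne i}\lambda_j\le\left(\frac{n-1}{n-2}\right)^{n-2}.
\]
Substituting gives $\det(A)\ge-(n-1)\left(\frac{n-1}{n-2}\right)^{n-2}$. Undoing the scaling gives the stated inequality.

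\textbf{Strictness.} If $B$ is positive definite and $a_{nn}>0$, no perturbation is needed, since all $a_{ii}>0$ already. In the first step of the main estimate, the dropped term $a_{nn}\det(B)$ is then strictly positive, so the resulting inequality is strict.

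The step I expect to need the most care is not the estimate itself but the bookkeeping in the degenerate cases (zero diagonal entries and singular $B$). In particular, I must check that the perturbation $B+\varepsilon I$ is legitimate even though it may make $\det(A)$ positive. It is, because the proof never uses $\det(A)\le 0$.
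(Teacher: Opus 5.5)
Your proof is correct and follows the paper's argument. The paper writes $\det(A)=\det(B)(a_{nn}-b^TB^{-1}b)$ and bounds $\det(B)\,b^TB^{-1}b\le \det(B)\|B^{-1}\|\,\|b\|^2=\lambda_2\cdots\lambda_{n-1}\|b\|^2$, which is exactly your $\lambda_{\max}(\operatorname{adj}(B))\,\|b\|^2$, and then applies the same AM--GM step to the $n-2$ largest eigenvalues. The one difference is that your adjugate form covers singular $B$ directly, so you perturb only for zero diagonal entries (where a zero $a_{ii}$ in fact forces a zero row of $A$), while the paper first treats positive definite $B$ and then passes to the limit via $A+\varepsilon I_n$.
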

These results provide a determinant-based perspective on the gap between local and global PSD constraints. In particular, they show that although $(n-1)$-local PSD constraints does not necessarily guarantee positive semidefiniteness, the violation of classical determinant inequalities is tightly controlled.
In addition, we hope that our results contribute to and enrich the extensive literature on determinantal inequalities, much of which focuses on positive semidefinite and related classes of matrices. This literature has a long history beginning with the classical works of Hadamard~\cite{Had} and Fischer~\cite{Fis}, and subsequent important developments by Carlson~\cite{C1,C2}, Koteljanskii~\cite{Ko1, Ko2}, and others; see also the surveys and related treatments in \cite{MM, BJ2,Bhatia1997,FJ}.

\section{Notation}
\label{sec:notation}

This work involves real $n \times n$ matrices, denoted by $\mathbb{R}^{n \times n}$.  We let
$I_m$ and $J_m$ denote the $m\times m$ identity matrix and all-ones matrix, and we use the notation $\mathbf{1_n} \in \mathbb{R}^{n}$ for the all-ones vector. We may omit the subscripts if the size is evident from the context.

Let $N := \{1,\ldots,n\}$. For any $A \in \mathbb{R}^{n \times n}$ and index sets $\alpha, \beta \subseteq N$, we denote by $A[\alpha, \beta]$ the submatrix of $A$ whose rows and columns are indexed by $\alpha$ and $\beta$, respectively, where the indices are taken in ascending order. When $\alpha = \beta$, we write $A[\alpha] := A[\alpha, \alpha]$ and refer to it as a {principal submatrix} of $A$. The quantity $\det(A[\alpha])$ is called a {principal minor} of order $|\alpha|$. By convention, $A[\emptyset] := 1$.

For $\alpha, \beta \subseteq N$, we use $A(\alpha, \beta)$ to denote the submatrix obtained from $A$ by deleting the rows indexed by $\alpha$ and the columns indexed by $\beta$. We write $A(\alpha) := A(\alpha, \alpha)$, and for a singleton $\alpha = \{i\}$, we abbreviate $A(i) := A(\{i\})$. For any $\alpha \subseteq N$, we denote by $\alpha^c := N \setminus \alpha$ its complement.

If $A[\alpha]$ is invertible, the {Schur complement of $A[\alpha]$ in $A$} is defined by
\begin{equation*}
A / A[\alpha] := A[\alpha^c] - A[\alpha^c, \alpha]\, A[\alpha]^{-1}\, A[\alpha, \alpha^c].
\end{equation*}
In this case, $A$ is invertible if and only if $A / A[\alpha]$ is invertible, and
\begin{equation*}
\det(A) = \det(A[\alpha])\, \det(A / A[\alpha]).
\end{equation*}

\section{Extended Hadamard’s Inequality}

In this section, we derive a sharp lower bound for $\det(A)$ under the assumption that $A$ is in $S^{+}_{n,n-1}$. This result provides a lower-bound analog of the classical Hadamard’s inequality. The following example demonstrates that such a lower bound does not exist when $n=2$. Consequently, meaningful lower bounds can only be expected in dimensions $n \ge 3$.

\begin{ex}
Clearly the matrix
\[
A_t=
\begin{pmatrix}
1 & t\\
t & 1
\end{pmatrix},
\,\,\, t>1
\]
is in $S^{++}_{2,1}$.  However, $\det(A_t)\to -\infty$ as  $t\to\infty$.
\end{ex}

If $A \in S^+_{n,n-1}$ satisfies $a_{ii} = 1$ for all $i$, then $|a_{ij}| \le 1$ for all $i,j$. Another useful property is given in the following lemma.

\begin{lem}
\label{2026-02-02-lem}
Any $A  \in S^+_{n,n-1}$  {has one negative eigenvalue} and all other eigenvalues are nonnegative.
{If $A  \in S^{++}_{n,n-1}$, then the nonnegative eigenvalues are positive.}

\end{lem}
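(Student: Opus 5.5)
We argue via Cauchy interlacing. Let $A \in S^+_{n,n-1}$ have eigenvalues $\lambda_1 \le \lambda_2 \le \cdots \le \lambda_n$. Let $B = A(i)$ be any principal submatrix of order $n-1$ (say $i=n$), with eigenvalues $\mu_1 \le \cdots \le \mu_{n-1}$. Since $B$ is PSD by the definition of $S^+_{n,n-1}$, we have $\mu_1 \ge 0$. Interlacing gives
\[
\lambda_1 \le \mu_1 \le \lambda_2 \le \mu_2 \le \cdots \le \mu_{n-1} \le \lambda_n ,
\]
so $\lambda_2,\dots,\lambda_n \ge \mu_1 \ge 0$. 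In other words, at most one eigenvalue of $A$ is negative.

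To get exactly one negative eigenvalue, use the defining condition $\det(A) < 0$. Since $\det(A) = \lambda_1\lambda_2\cdots\lambda_n < 0$, no eigenvalue is zero and the number of negative eigenvalues is odd. With at most one negative eigenvalue, there is exactly one, namely $\lambda_1 < 0$. The remaining $\lambda_2,\dots,\lambda_n$ are then nonnegative; in fact they are positive, because their product is nonzero. This already covers the PD refinement.

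For $A \in S^{++}_{n,n-1}$ we can also see positivity directly. Here $B$ is positive definite, so $\mu_1 > 0$, and interlacing gives $\lambda_j \ge \mu_{j-1} \ge \mu_1 > 0$ for $j \ge 2$.

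There is no real obstacle. The one point to be careful about is that "exactly one" negative eigenvalue comes from the sign condition $\det(A)<0$ built into the modified definition of $S^+_{n,n-1}$, and not from the local PSD constraints alone. Those constraints only give "at most one", as the example $A=0$ shows.
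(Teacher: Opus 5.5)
Your proof is correct and follows the paper's argument: Cauchy interlacing against a PSD principal submatrix of order $n-1$ gives $\lambda_2 \ge \lambda_{\min}(A(i)) \ge 0$, and then $\det(A)=\lambda_1\cdots\lambda_n<0$ forces exactly one negative eigenvalue. Your extra remark is also correct: $\det(A)\neq 0$ makes $\lambda_2,\dots,\lambda_n$ strictly positive even for $A\in S^+_{n,n-1}$, which settles the $S^{++}_{n,n-1}$ refinement that the paper's proof leaves implicit.
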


\begin{proof}
Let $\lambda_1 \le \lambda_2 \le \cdots \le \lambda_n$
denote the eigenvalues of $A$.
By assumption $A(i)$ is positive semidefinite, for every $i$, and hence all its eigenvalues are nonnegative. By Cauchy's eigenvalue interlacing theorem (see, for example, \cite[Chapter 4]{HJ1}), we have
$\lambda_2 \ge \lambda_{\min}(A(i)) \ge 0,$
showing that $A$ has at most one negative eigenvalue. {Since}
$\det(A)=\lambda_1 \cdots \lambda_n <0,$$A$ has exactly one negative eigenvalue while the remaining are nonnegative. 
\end{proof}

The following example introduces a class of matrices in $S^+_{n,n-1}$ that will be useful for illustrating the sharpness of the lower bound on the determinant.

\begin{ex}\label{specialapsd}
Consider the $n \times n$ matrix
\[
A= \left(
\begin{array}{cccc}
1 & -x & \cdots & -x \\
-x & 1 & \cdots & -x \\
\vdots & \vdots & \ddots & \vdots \\
-x & -x & \cdots & 1 \\
\end{array}
\right),
\,\,\, 0 < x < 1.
\]
Then $A \in S^{+}_{n,n-1}$, whenever $x \le \frac{1}{n-2}$. Moreover, when $x = \frac{1}{n-2}$, we have
\[
\det(A) = -\frac{1}{n-2}\left(\frac{n-1}{n-2}\right)^{n-1}.
\]
\end{ex}

\begin{thm}
\label{2026-04-04-thm1}
Let $A \in S^+_{n,n-1}$, with $n \ge 3,$ then 
 \begin{align}
 \label{2026-04-14-Had}
 \det(A)\geq-\frac{1}{n-2}\left(\frac{n-1}{n-2}\right)^{n-1}a_{11}\cdots a_{nn}.
\end{align}
{The inequality is strict when $A \in S^{++}_{n,n-1}$.} 
\end{thm}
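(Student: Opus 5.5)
Since $\det(A)<0$, the plan is to reduce to unit diagonal and then turn the constraints into inequalities on the eigenvalues of $A$ alone. The reduction to unit diagonal goes as follows. If some $a_{ii}=0$, then for any $j\neq i$ the pair $\{i,j\}$ lies in some principal submatrix of size $n-1$, because $n\ge 3$. That submatrix is PSD, so $a_{ij}=0$. Then row $i$ of $A$ is zero and $\det(A)=0$, contradicting $\det(A)<0$. Hence $a_{ii}>0$ for all $i$. With $D=\mathrm{diag}(a_{11},\dots,a_{nn})$, the matrix $D^{-1/2}AD^{-1/2}$ lies in $S^+_{n,n-1}$ (respectively $S^{++}_{n,n-1}$), has unit diagonal, and has determinant $\det(A)/(a_{11}\cdots a_{nn})$. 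So it suffices to prove $\det(A)\ge -\frac{1}{n-2}\left(\frac{n-1}{n-2}\right)^{n-1}$ when $a_{ii}=1$ for all $i$.

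Next I would use Lemma~\ref{2026-02-02-lem}. Write the eigenvalues as $\lambda_1=-\mu<0$ and $\lambda_2,\dots,\lambda_n$, with orthonormal eigenvectors $v_1,\dots,v_n$. Since $\det(A)\neq 0$, all $\lambda_k$ with $k\ge 2$ are strictly positive, and $A$ is invertible. The key identity is
\[
\det(A(i)) = \det(A)\,(A^{-1})_{ii}.
\]
Because $\det(A(i))\ge 0$ and $\det(A)<0$, this gives $(A^{-1})_{ii}\le 0$ for every $i$. In the $S^{++}_{n,n-1}$ case $\det(A(i))>0$, so the inequality is strict. Summing over $i$ gives $\mathrm{tr}(A^{-1})\le 0$, that is,
\[
\sum_{k=2}^n \frac{1}{\lambda_k}\le \frac{1}{\mu},
\]
with strict inequality in the PD case. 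The unit diagonal gives the second constraint, $\sum_{k\ge 2}\lambda_k = n+\mu$.

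The remaining step is an optimization. By Cauchy--Schwarz (or AM--HM), $\sum_{k\ge2}1/\lambda_k\ge (n-1)^2/(n+\mu)$. Combined with the previous display this gives $(n-1)^2\mu\le n+\mu$, i.e.
\[
\mu\le \frac{n}{n(n-2)}=\frac{1}{n-2}.
\]
By AM--GM, $\prod_{k\ge2}\lambda_k\le \left(\frac{n+\mu}{n-1}\right)^{n-1}$, so
\[
|\det(A)| = \mu\prod_{k\ge 2}\lambda_k \le \mu\left(\frac{n+\mu}{n-1}\right)^{n-1}.
\]
The right-hand side is increasing in $\mu$. At $\mu=1/(n-2)$ we have $\frac{n+\mu}{n-1}=\frac{(n-1)^2}{(n-2)(n-1)}=\frac{n-1}{n-2}$, which yields exactly the bound. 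In the PD case the strict inequality $\mu<1/(n-2)$ makes the final bound strict. Example~\ref{specialapsd} shows the bound is attained.

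I expect the main obstacle to be finding the right way to encode "all $(n-1)\times(n-1)$ principal minors are nonnegative" spectrally; the route above does this through the adjugate identity and the trace of $A^{-1}$. The rest is a two-line AM--GM / Cauchy--Schwarz computation. One point that needs care is that invertibility of $A$ (all $\lambda_k>0$ for $k\ge2$) follows from $\det(A)<0$ even in the semidefinite case.
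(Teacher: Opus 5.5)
Your proof is correct and follows the paper's argument. Your identity $\det(A(i))=\det(A)(A^{-1})_{ii}$, summed over $i$, is the same fact the paper uses in the form $\sum_i\det(A(i))=\det(A)\sum_i 1/\lambda_i$; after it come the identical Cauchy--Schwarz bound $\mu\le 1/(n-2)$ and the AM--GM/monotonicity step, and the only difference is that you handle the semidefinite case directly (showing $a_{ii}>0$ and keeping non-strict inequalities) instead of perturbing to $A+\varepsilon I_n$ and taking a limit, which is a valid and slightly more economical variant.
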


\begin{proof}
{Let $n \ge 3$. The proof is divided into three steps.} 

{  {\it Step 1.} Suppose $A \in S^{++}_{n,n-1}$ is such that $a_{ii} = 1$ for all $i$.} 
By Lemma~\ref{2026-02-02-lem}, the eigenvalues of $A$ satisfy 
$\lambda_1 < 0  < \lambda_2 \le \cdots \le \lambda_n$ and 
\begin{equation}\label{trace}
\lambda_1 + \cdots + \lambda_n =\operatorname{tr}(A)=n,
\end{equation}
since all diagonal entries of $A$ are equal to $1$. Define the elementary symmetric polynomial of degree $n-1$ in the eigenvalues by
\[
a_{n-1}(\lambda)
=
\sum_{i=1}^n \prod_{j\ne i}\lambda_j.
\]
It is well-known that
\[
a_{n-1}(\lambda)=\sum_{i=1}^n \det(A(i)).
\]
Indeed, the characteristic polynomial of $A$ is given by
\[
\det(tI-A)
=
\prod_{j=1}^n (t-\lambda_j)
=
t^n-a_1(\lambda)t^{n-1}+\cdots+(-1)^n a_n(\lambda).
\]
The coefficient of $t$ is $(-1)^{n-1}a_{n-1}(\lambda)$ and that same coefficient is also the signed sum of the $(n-1)\times (n-1)$ principal minors. Since all proper principal submatrices of $A$ are positive definite, we see that
\begin{align*}
a_{n-1}(\lambda)>0.
\end{align*}

Next we obtain a lower bound on the negative eigenvalue of $A$. Since
\[
\prod_{j\ne i}\lambda_j=\frac{\det(A)}{\lambda_i}, \mbox{ for all $i=1,\ldots,n$},
\]
we have
\begin{align*}
a_{n-1}(\lambda)
=
\det(A)\sum_{i=1}^n \frac{1}{\lambda_i}.
\end{align*}

Since $\det(A)<0$ and $a_{n-1}(\lambda)>0$, it follows that
\begin{align*}
\sum_{i=1}^n \frac{1}{\lambda_i}<0.
\end{align*}

Setting $t:= - \lambda_1>0$, the last inequality becomes
\begin{equation}\label{t}
\sum_{i=2}^n \frac{1}{\lambda_i}<\frac{1}{t}.
\end{equation}

Applying the Cauchy--Schwarz inequality to the vectors 
$$
(\sqrt{\lambda_2},\dots, \sqrt{\lambda_n}) \,\,\,  \mbox{ and } \,\,\, (1/\sqrt{\lambda_2},\dots, 1/\sqrt{\lambda_n}),
$$ 
gives
\[
(n-1)^2
\le
\left(\sum_{i=2}^n\lambda_i\right)
\left(\sum_{i=2}^n\frac{1}{\lambda_i}\right).
\]

From Equation~\eqref{trace}, we have $\lambda_2+\cdots + \lambda_n=n+t$ and thus
\[
\sum_{i=2}^n\frac{1}{\lambda_i}
\ge
\frac{(n-1)^2}{n+t}.
\]

Combining with Equation~\eqref{t}, leads to
\[
\frac{1}{t}>\frac{(n-1)^2}{n+t}
\quad \mbox{or} \quad
n+t>t(n-1)^2
\quad \mbox{or} \quad
t<\frac{1}{n-2}.
\]

Thus,
\begin{equation}\label{smallesteig}
\lambda_1>-\,\frac{1}{n-2}.
\end{equation}

Next, by the arithmetic-geometric mean inequality applied to the positive eigenvalues $\lambda_2,\dots,\lambda_n$, we have
\[
\prod_{i=2}^n\lambda_i
\le
\left(\frac{\lambda_2 + \cdots + \lambda_n}{n-1}\right)^{n-1}
=
\left(\frac{n-\lambda_1}{n-1}\right)^{n-1}.
\]

Multiplying by $\lambda_1<0$ reverses the inequality, and hence
\[
\det(A)
=
\lambda_1\prod_{i=2}^n\lambda_i
\ge
\lambda_1\left(\frac{n-\lambda_1}{n-1}\right)^{n-1}.
\]

Since the function $x \to x (n-x)^{n-1}$ is strictly increasing 
on $(-\infty, 0)$, using \eqref{smallesteig} yields
\[
\det(A)
>
-\frac{1}{n-2}
\left(\frac{n+\frac{1}{n-2}}{n-1}\right)^{n-1}
=
-\frac{1}{n-2}\left(\frac{n-1}{n-2}\right)^{n-1}.
\]

{  {\it Step 2.} Suppose $A \in S^{++}_{n,n-1}$}.
Let $D=\operatorname{diag}(a_{11},\dots,a_{nn})$ and let 
$B := D^{-1/2} A D^{-1/2}.$
Then $B$ has ones on the diagonal, and
$\det(B)=\det(A)/(a_{11}\cdots a_{nn})$. Since $A \in S^{++}_{n,n-1}$, so is $B$. Applying Step 1 to $B$, we conclude that \eqref{2026-04-14-Had} holds with strict inequality.

{  {\it Step 3.} Suppose $A \in S^{+}_{n,n-1}$}. For $\varepsilon>0$, define $A_\varepsilon:=A+\varepsilon I_n.$ Then every proper principal submatrix of $A_\varepsilon$ is positive definite. Since the map $\varepsilon\mapsto \det(A+\varepsilon I_n)$
is continuous and $\det(A) < 0$, one sees that $\det(A_\varepsilon)<0$ for all $\epsilon > 0$ close to zero.
Applying Step 2 to $A_\varepsilon$, we obtain
\[
\det(A_\varepsilon)
>
-\frac{1}{n-2}\left(\frac{n-1}{n-2}\right)^{n-1}
(a_{11}+\varepsilon)\cdots (a_{nn}+\varepsilon).
\]
Taking the limit as $\epsilon \to 0^+$ proves the theorem.
\end{proof}

\begin{ex}
\label{2026-04-04-ex}    
The lower bound in \eqref{2026-04-14-Had} can be approached with matrices in $S^{++}_{n,n-1}$ as follows.
Consider
\[
A(r):=(1-r)I+r\mathbf 1\mathbf 1^\top,
\quad \mbox{where } -\frac{1}{n-2}<r<-\frac{1}{n-1}.
\]

For  \(1 \le k \le n-1\), the eigenvalues of every \(k\times k\) principal submatrix of \(A(r)\) are
$$
\left\{
\begin{array}{ll}
1-r & \text{with multiplicity } k-1 \text{ and } \\
1+(k-1)r & \text{with multiplicity } 1.
\end{array}
\right.
$$
Since \(1-r>0\) and \(k\le n-1\), we have $1+(k-1)r \ge 1+(n-2)r>0$.
Hence every proper principal submatrix of \(A(r)\) is positive definite.
The eigenvalues of \(A(r)\) are
$$
\left\{
\begin{array}{ll}
1-r & \text{with multiplicity } n-1 \text{ and } \\
1+(n-1)r & \text{with multiplicity } 1.
\end{array}
\right.
$$
Because \(r<- 1/(n-1)\), we have \(1+(n-1)r<0\), and therefore
\[
\det(A(r))=(1-r)^{\,n-1}\bigl(1+(n-1)r\bigr)<0.
\]

Moreover,
\[
\det(A(r))
=(1-r)^{\,n-1}\bigl(1+(n-1)r\bigr)
\longrightarrow
-\frac{1}{n-2}\left(\frac{n-1}{n-2}\right)^{n-1}
\qquad \text{as } r\to -\frac{1}{n-2}^{+}.
\]
In the limit, we obtain the matrix  $A\in S^+_{n,n-1}$ with entries

    \[ a_{ij} = \begin{cases} 
          \;\;1 & \text{if} \;\; i=j \\
          -\frac{1}{n-2} & \text{if} \;\; i\neq j
       \end{cases}
    \] 
    and determinant 
    $$
    \det(A)=-\frac{1}{n-2}\left(\frac{n-1}{n-2}\right)^{n-1}.
    $$
\end{ex}
\subsection{A lower bound when the proper leading minors are positive}

This subsection gives a lower bound on the determinant of $A$ for a class of matrices that is  larger than $S^+_{n,n-1}$.

\begin{thm}
\label{2026-04-04-thm-wider}
Let
\begin{align}
\label{2026-02-02-matr}
A=\begin{pmatrix}
B & b\\
b^T & a_{nn}
\end{pmatrix}\in\mathcal S^n~~
\mbox{with $n\geq 3$}, 
\end{align}
where $B$ is an $(n-1) \times (n-1)$ positive semidefinite matrix. Assume that
\begin{align}
\label{2026-04-14-cond1}
{\det(A) \le0, \,\,\, a_{nn} \ge 0, \text{ and } \,\,
|b_i|\le \sqrt{a_{ii}a_{nn}},\quad i=1,\dots,n-1.}
\end{align}
Then
\begin{align}
\label{2026-02-02-lb2}
\det(A)\ge
-(n-1)\left(\frac{n-1}{n-2}\right)^{n-2}\,a_{11}\cdots a_{nn}.
\end{align}
The inequality is strict when $B$ is positive definite and \eqref{2026-04-14-cond1} holds with $a_{nn} >0$.
\end{thm}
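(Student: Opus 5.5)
The plan is to avoid eigenvalue arguments on $A$ itself and use the bordered-determinant (adjugate) expansion
\[
\det(A)=a_{nn}\det(B)-b^{\top}\operatorname{adj}(B)\,b ,
\]
which holds for every symmetric $B$, singular or not. So no perturbation step like Step~3 of Theorem~\ref{2026-04-04-thm1} is needed.

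\emph{Degenerate diagonals.} First dispose of these. If $a_{nn}=0$, then \eqref{2026-04-14-cond1} forces $b=0$, so $\det(A)=0$ and \eqref{2026-02-02-lb2} holds. If $a_{ii}=0$ for some $i\le n-1$, then $B\succeq 0$ forces the $i$-th row of $B$ to vanish, and $|b_i|\le\sqrt{a_{ii}a_{nn}}=0$ gives $b_i=0$. Hence the $i$-th row of $A$ is zero and $\det(A)=0$.

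\emph{Normalisation.} Otherwise all $a_{ii}>0$. Put $D=\operatorname{diag}(a_{11},\dots,a_{nn})$ and replace $A$ by $D^{-1/2}AD^{-1/2}$. This preserves $B\succeq 0$ and the sign conditions, turns the entry conditions into $|b_i|\le 1$ with $a_{nn}=1$, and divides $\det(A)$ by $a_{11}\cdots a_{nn}$. So it suffices to treat unit diagonal.

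\emph{Main estimate.} Let $0\le\mu_1\le\dots\le\mu_{n-1}$ be the eigenvalues of $B$, so $\sum\mu_i=\operatorname{tr}B=n-1$. The matrix $\operatorname{adj}(B)$ is positive semidefinite with eigenvalues $\prod_{j\ne i}\mu_j$. Its largest eigenvalue is $\prod_{j\ge 2}\mu_j$, a product of $n-2$ nonnegative numbers whose sum is at most $n-1$. By AM--GM,
\[
\lambda_{\max}(\operatorname{adj}B)\le\left(\frac{n-1}{n-2}\right)^{n-2}.
\]
Since $|b_i|\le 1$ gives $\|b\|^2\le n-1$, we get
\[
b^{\top}\operatorname{adj}(B)\,b\le (n-1)\left(\frac{n-1}{n-2}\right)^{n-2}.
\]
Then $\det(A)\ge \det(B)-(n-1)\bigl(\frac{n-1}{n-2}\bigr)^{n-2}$, and $\det(B)\ge 0$ yields \eqref{2026-02-02-lb2}. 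The hypothesis $\det(A)\le 0$ is not actually needed for the bound; it only makes the statement meaningful.

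\emph{Strictness.} If $B$ is positive definite and $a_{nn}>0$, we are in the nondegenerate case and $\det(B)>0$ after scaling. The last inequality therefore becomes strict.

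The step needing the most care is the Rayleigh-quotient bound on $b^{\top}\operatorname{adj}(B)b$. One must correctly identify the top eigenvalue of the adjugate as the product of the $n-2$ largest eigenvalues of $B$, which is valid even when $B$ is singular. If a sharper constant were wanted, one would instead have to optimise over the cube $|b_i|\le 1$ rather than the ball, but the stated constant only requires the ball bound.
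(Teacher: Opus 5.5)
Your proof is correct and is in substance the paper's argument. Since $\operatorname{adj}(B)=\det(B)B^{-1}$ when $B\succ 0$, your Rayleigh bound $b^{\top}\operatorname{adj}(B)\,b\le \mu_2\cdots\mu_{n-1}\|b\|^2$ followed by AM--GM is exactly the paper's Schur-complement estimate $\det(B)\,b^{\top}B^{-1}b\le \det(B)\|B^{-1}\|\|b\|^2=\lambda_2\cdots\lambda_{n-1}\|b\|^2$. The only gain from the adjugate form, together with your direct handling of zero diagonal entries, is that singular $B$ is covered without the paper's $A+\varepsilon I$ perturbation and limit; your closing aside about a sharper constant is moot, since the paper's example with singular $B$ and $b=\mathbf 1$ attains equality.
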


\begin{proof}
If $a_{nn}=0$, then the condition $|b_i|\le \sqrt{a_{ii}a_{nn}}$ implies $b_i=0$ for all $i$, hence $b=0$. Thus, $\det(A)=0$ and  both sides of \eqref{2026-02-02-lb2} are zero. If $\det(A)=0$, then again \eqref{2026-02-02-lb2}  holds trivially.  Assume for the rest of the proof that $\det(A) < 0$ and $a_{nn}>0$.

Suppose first that $B$ is an $(n-1) \times (n-1)$ positive definite matrix with diagonal entries equal to $1$.
 The Schur complement of $B$ in $A$ is $a_{nn}-b^TB^{-1}b.$ Thus, 
 $$
 \det(A)=\det(B)(a_{nn}-b^TB^{-1}b).
 $$ 
 Since $\det(B)>0$ and $\det(A) <0,$ it follows that $a_{nn}-b^TB^{-1}b<0.$ Applying the Cauchy–Schwarz inequality, we get \begin{eqnarray*} 
   0 < b^TB^{-1}b=\langle B^{-1}b, b\rangle
    \leq  \lVert B^{-1}b \rVert \lVert{b} \rVert \le \lVert B^{-1}\rVert \lVert b\rVert ^2 
    \le \lVert B^{-1}\rVert (n-1)a_{nn},
    \end{eqnarray*}
    where the last inequality holds since $|b_i| \le \sqrt{a_{nn}}$ for all $i$. 
    Let $\lambda(B)=\{\lambda_1, \dots, \lambda_{n-1}\}$ with $0<\lambda_1\leq \dots \leq \lambda_{n-1}.$ Then $\lVert B^{-1}\rVert=1/\lambda_1$ and we have 
    \begin{eqnarray*}
       \det(A)&=&\det(B)(a_{nn}-b^TB^{-1}b)\\
       &\geq & \det(B)\big(a_{nn}-\lVert B^{-1}\rVert (n-1)a_{nn}\big)\\
       &\geq & \lambda_1\cdots \lambda_{n-1}a_{nn}-\lambda_2\cdots \lambda_{n-1}(n-1)a_{nn} \\
       & > & -\lambda_2\cdots \lambda_{n-1}(n-1)a_{nn},
    \end{eqnarray*}
as $a_{nn} > 0$.
 Thus, minimizing $\det(A)$ is equivalent to maximizing $\lambda_2\cdots \lambda_{n-1}.$ The 
 arithmetic-geometric mean inequality ensures that
\begin{eqnarray*}
  \lambda_2\cdots \lambda_{n-1}
    \leq \left(\frac{\lambda_2+\dots+\lambda_{n-1}}{n-2}\right)^{n-2}
  \leq \left(\frac{\text{tr}(B)}{n-2}\right)^{n-2}
  = \left(\frac{n-1}{n-2}\right)^{n-2},
\end{eqnarray*}
which gives \eqref{2026-02-02-lb2} with strict inequality.

Next, assume that $B$ is a positive definite matrix with arbitrary diagonal entries. Define
$D := \operatorname{diag}(a_{11},\dots, a_{n-1,n-1}, 1)$ and apply the 
first part of the proof to the matrix $\widetilde A := D^{-1/2} A D^{-1/2}$. 
Using the fact that  $\det (\widetilde A) = \det(A)/(a_{11} \cdots a_{n-1,n-1})$, gives \eqref{2026-02-02-lb2} with strict inequality.

Finally, assume $B$ is positive semidefinite.
For $\varepsilon>0$, define $A_\varepsilon := A + \varepsilon I_n.$
Then, the leading $(n-1) \times (n-1)$ principal submatrix of $A_\varepsilon$ is $B+\varepsilon I_{n-1}\succ 0$. Also,
\begin{align*}
|b_i|
\le \sqrt{a_{ii}a_{nn}}
\le \sqrt{(a_{ii}+\varepsilon)(a_{nn}+\varepsilon)} \mbox{ for each $i=1,\dots,n-1$}.
\end{align*}
Since $\det(A_\varepsilon)\to \det(A) < 0$ as $\varepsilon$ approaches $0$, we have
$\det(A_\varepsilon)<0$ for all sufficiently small $\varepsilon>0$. Applying the last paragraph to $A_\varepsilon$ gives
\begin{align*}
\det(A_\varepsilon)>
-(n-1)\left(\frac{n-1}{n-2}\right)^{n-2}
(a_{11}+\varepsilon)\cdots(a_{nn}+\varepsilon).
\end{align*}
Taking the limit as $\varepsilon \to 0^+$, completes the proof.
\end{proof}

\begin{ex}
The condition $|b_i|\le \sqrt{a_{ii}a_{nn}},$ for  $i=1,\dots,n-1$,
is essential. Without it the conclusion fails even for $n=3$. Indeed, consider
\begin{align*}
A=
\begin{pmatrix}
1 & 0 & t\\
0 & 1 & 0\\
t & 0 & 1
\end{pmatrix},\,\,\, t>1.
\end{align*}
Then $B=I_2\succ 0$, $a_{33}=1>0$, and
$\det(A)=1-t^2 \to -\infty$ as $t\to\infty$. On the other hand, the theorem would give
\begin{align*}
\det(A)> {-4},
\end{align*}
a contradiction for {$t>\sqrt{5}$}.
\end{ex}

As one should expect, the lower bound in Theorem~\ref{2026-04-04-thm-wider} is smaller than the lower bound in Theorem~\ref{2026-04-04-thm1}. The two lower bounds are equal only when $n=3$.

\begin{ex}
The lower bound  \eqref{2026-02-02-lb2} can be attained.
Indeed, let  $A$ be of the form \eqref{2026-02-02-matr} with $a_{nn}=1$, $b=(1,\dots ,1)^T$, and
 where $B=(b_{ij})$ is defined as
    \[ b_{ij} = \begin{cases} 
          \;\;1 & \text{if} \;\; i=j, \\
          -\frac{1}{n-2} & \text{if} \;\; i\neq j.
       \end{cases}
    \]
It can be verified that  every proper principle minor of $B$ is positive, $\det(B)=0$, and for this $A$, inequality \eqref{2026-02-02-lb2} holds with equality.
\end{ex}

\section{Extended Fischer’s inequality}

Fisher’s inequality~\eqref{eq:fisher} provides an upper bound for $\det(A)$ when $A$ is positive semidefinite.  In this section, we establish an analogous  sharp lower bound for matrices in $S^+_{n,n-1}$. 
 
%

\begin{thm}
\label{2026-04-05-thm}
Let $A \in S^+_{n,n-1}$, with $n \ge 3$, and let $\alpha \subseteq \{1,\ldots,n\}$. Then
\begin{align}
\label{2026-04-15-ineq}
\det(A) \ge -\frac{1}{n-2}\left(\frac{n-1}{n-2}\right)^{n-1}
\det (A[\alpha])\det (A[\alpha^c]).
\end{align}
{The inequality is sharp when $A \in S^{++}_{n,n-1}$.}
\end{thm}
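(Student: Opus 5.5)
We need to prove $\det(A) \ge -c_n \det(A[\alpha])\det(A[\alpha^c])$, where $c_n=\frac{1}{n-2}\left(\frac{n-1}{n-2}\right)^{n-1}$. The plan is to reduce to the normalized situation of Theorem~\ref{2026-04-04-thm1} by a block-diagonal congruence, in the same way that Step 2 of that proof used diagonal scaling. The trivial cases come first. If $\alpha=\emptyset$ or $\alpha=N$, the right-hand side is $-c_n\det(A)$, and the inequality $\det(A)\ge -c_n\det(A)$ fails since $\det(A)<0$ and $c_n>0$. So these cases have to be excluded, or read with the convention that $\alpha$ is a proper nonempty subset; I will assume that. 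Then $A[\alpha]$ and $A[\alpha^c]$ are proper principal submatrices, hence PSD, so the right-hand side is $\le 0$.

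\emph{Step 1 ($A\in S^{++}_{n,n-1}$).} Here $A[\alpha]$ and $A[\alpha^c]$ are positive definite. Set $D:=A[\alpha]^{1/2}\oplus A[\alpha^c]^{1/2}$, after permuting indices so that $\alpha$ comes first; a simultaneous row and column permutation preserves all the hypotheses. Let $\widetilde A:=D^{-1}AD^{-1}$. Then $\widetilde A$ has identity diagonal blocks, so its diagonal entries are all $1$, and
\[
\det(\widetilde A)=\frac{\det(A)}{\det(A[\alpha])\det(A[\alpha^c])}.
\]
The key issue is whether $\widetilde A\in S^{++}_{n,n-1}$. Congruence by $D$ preserves inertia, so $\widetilde A$ has exactly one negative eigenvalue and $\det(\widetilde A)<0$. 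It is \emph{not} clear, however, that congruence by a non-diagonal block matrix preserves positivity of the $(n-1)\times(n-1)$ principal submatrices, because a principal submatrix of $\widetilde A$ is not a congruent image of a principal submatrix of $A$.

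This is the main obstacle, so I would avoid the congruence and instead work with eigenvalues directly, mirroring Step 1 of Theorem~\ref{2026-04-04-thm1}. Write $A=\begin{pmatrix}P&C\\C^T&Q\end{pmatrix}$ with $P=A[\alpha]$ and $Q=A[\alpha^c]$. Then
\[
\det(A)=\det(P)\det(Q)\det\bigl(I-K^TK\bigr),\qquad K:=P^{-1/2}CQ^{-1/2}.
\]
Let $\sigma_1\ge\sigma_2\ge\cdots$ be the singular values of $K$. Because $A$ has exactly one negative eigenvalue, exactly one of them exceeds $1$, namely $\sigma_1>1$, and the rest are $\le1$. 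Therefore
\[
\frac{\det(A)}{\det(P)\det(Q)}\ \ge\ 1-\sigma_1^2 .
\]
The remaining task is to bound $\sigma_1$ using the $(n-1)$-local condition.

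\emph{Step 2 (bounding $\sigma_1$).} I expect this to be the hardest step. My first attempt would be to compare with the extremal Example~\ref{2026-04-04-ex} by an extremality argument. Fix the blocks $P=I$, $Q=I$ and maximize $\|C\|$ subject to all $(n-1)$-minors of $\begin{pmatrix}I&C\\C^T&I\end{pmatrix}$ being PSD. This is a normalized problem, so Step 1 of Theorem~\ref{2026-04-04-thm1} applies directly: it gives $\lambda_{\min}>-\frac{1}{n-2}$, and since the eigenvalues are $1\pm\sigma_i$, we get $\sigma_1<\frac{n-1}{n-2}$. The AM–GM part of that proof then gives the product bound $\det\ge -c_n$.

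So if a normalization to $P=I$, $Q=I$ preserving the local condition were available, the statement would follow. Since it is not obviously available, the fallback is to run Step 1 of Theorem~\ref{2026-04-04-thm1} for the pencil directly. Using
\[
\sum_i\det(A(i))>0\quad\text{and}\quad\det(A)\sum_i\frac{1}{\lambda_i}>0,
\]
I would try to derive the analogous inequality for the generalized eigenvalues of $A$ relative to $D^2$.

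\emph{Step 3 (closure).} The PSD case is handled by applying Step 1 to $A+\varepsilon I$ and letting $\varepsilon\to0^+$, exactly as in Step 3 of Theorem~\ref{2026-04-04-thm1}. For sharpness, note that in Example~\ref{2026-04-04-ex} the choice $|\alpha|=1$ gives $\det(A[\alpha])\det(A[\alpha^c])$ close to the Hadamard product. I would check, however, whether equality is genuinely attained there or only approached, since $\det(A[\alpha^c])<1$ in that example.
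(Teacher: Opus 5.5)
There is a genuine gap at Step~2. You never obtain a bound on $\sigma_1$ in terms of $n$ alone. The pencil fallback cannot supply one either: the hypothesis controls the principal minors of $A$, not those of $\widetilde A=D^{-1}AD^{-1}$, and the analogue of $\sum_i\det(A(i))>0$ for $\widetilde A$ is simply false. No argument can close this step, because the statement itself is false for proper nonempty $\alpha$ too. You are right that $\alpha\in\{\emptyset,N\}$ already fails.

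Take $A(r)=(1-r)I+r\mathbf 1\mathbf 1^\top\in S^{++}_{n,n-1}$ from Example~\ref{2026-04-04-ex}, with $-\frac{1}{n-2}<r<-\frac{1}{n-1}$, and $\alpha=\{1\}$. In your notation $P=1$, $C=r\mathbf 1^\top$ and $Q\mathbf 1=(1+(n-2)r)\mathbf 1$, so $K$ is a row vector and
\[
\sigma_1^2=\frac{(n-1)r^2}{1+(n-2)r},\qquad
\frac{\det A(r)}{\det(A[\alpha])\det(A[\alpha^c])}=1-\sigma_1^2=\frac{(1-r)\bigl(1+(n-1)r\bigr)}{1+(n-2)r}.
\]
This ratio tends to $-\infty$ as $r\downarrow-\frac{1}{n-2}$. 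Concretely, for $n=3$ and $r=-0.9$ you get $\det A=-2.888$, whereas $-4\det(A[\alpha])\det(A[\alpha^c])=-4(0.19)=-0.76$. Here $\sigma_1^2=16.2$ and $\sum_i\det(\widetilde A(i))=3-\sigma_1^2<0$.

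Example~\ref{2026-04-16-ex} shows the same failure. Its determinant is $-\frac{1}{n-2}$ for every $p\in(0,1)$, while $\det(A[\alpha])\det(A[\alpha^c])=p$, so any $p<\bigl(\frac{n-2}{n-1}\bigr)^{n-1}$ violates the inequality. Your closing sharpness remark therefore points the wrong way. In Example~\ref{2026-04-04-ex}, $\det(A[\alpha^c])\to 0$ rather than to the Hadamard product $1$, while $\det A$ stays near $-\frac{1}{n-2}\bigl(\frac{n-1}{n-2}\bigr)^{n-1}$.

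The obstacle you isolated in Step~1 is exactly the flaw in the paper's own proof. The paper conjugates by $W=U\oplus V$, where $U$ and $V$ are eigenvector bases of $A[\alpha]$ and $A[\alpha^c]$. It then applies Theorem~\ref{2026-04-04-thm1} to $W^\top AW$ without checking that $W^\top AW\in S^{++}_{n,n-1}$. In the example above, take the $2\times 2$ principal submatrix of $W^\top AW$ on index $1$ and the index of the eigenvector $\mathbf 1/\sqrt{n-1}$ of $Q$. It is $\begin{pmatrix}1 & r\sqrt{n-1}\\ r\sqrt{n-1} & 1+(n-2)r\end{pmatrix}$, with determinant $(1-r)\bigl(1+(n-1)r\bigr)<0$. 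Hence every $(n-1)\times(n-1)$ principal submatrix containing it fails to be PSD.

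A diagonal congruence, as in Step~2 of Theorem~\ref{2026-04-04-thm1}, maps each principal submatrix to a congruent principal submatrix, which is why the Hadamard-type bound holds. A block congruence does not. The Fisher-type bound as stated cannot be proved by any route.
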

\begin{proof}
{Suppose first that $A \in S^{++}_{n,n-1}$.}  Let $A[\alpha]=U\Lambda U^T$ and $A[\alpha^c]=V\Gamma V^T$ represent the spectral decompositions of $A$ and $B$, respectively. Note that $U$ and $V$ are unitary matrices and $\Lambda={\rm diag} \{\lambda_1,\dots,\lambda_p\}$, 
$\Gamma={\rm diag} \{\nu_1,\dots, \nu_q \}$   are positive diagonal matrices, where $p := |\alpha|$ and $q:=|\alpha^c|$. Let $W:=U\oplus V$ denote the block diagonal matrix formed by $U$ and $V.$ The expression for $W^TAW$ can be computed as: 
$$
W^TAW=\begin{pmatrix}
    \Lambda & U^T A[\alpha, \alpha^c] U\\
    V^T A[\alpha, \alpha^c]^T V & \Gamma
\end{pmatrix}.
$$  
Theorem~\ref{2026-04-04-thm1} ensures that 
\begin{eqnarray*}
 \det(A)&=&\det(W^TAW) 
 > -\frac{1}{n-2}\left(\frac{n-1}{n-2}\right)^{n-1}{(\lambda_1\cdots\lambda_p )(\nu_1\cdots \nu_q )}\\
 &=& -\frac{1}{n-2}\left(\frac{n-1}{n-2}\right)^{n-1}\det(A[\alpha]) \det {(A[\alpha^c])}.
\end{eqnarray*}

{Suppose $A \in S^{+}_{n,n-1}$.} 
For \(\varepsilon>0\), define $A_\varepsilon:=A+\varepsilon I_n.$
Since every proper principal submatrix of \(A\) is positive semidefinite,
it follows that every proper principal submatrix of \(A_\varepsilon\) is
positive definite. Since the map $\varepsilon\mapsto \det(A+\varepsilon I_n)$
is continuous and $\det(A)<0$, we have $\det(A_\varepsilon)<0$
for all $\epsilon > 0$ close to zero. Applying the first paragraph to \(A_\varepsilon \in S^{++}_{n,n-1}\), we obtain
\[
\det(A_\varepsilon)>
-\frac{1}{n-2}\left(\frac{n-1}{n-2}\right)^{n-1}
\det (A_\varepsilon[\alpha]) \det (A_\varepsilon[\alpha^c]).
\]
Taking the limit as $\epsilon \to 0^+$ proves the theorem.
\end{proof}

%
%
%

\begin{ex}[Sharpness of Theorem~\ref{2026-04-05-thm}]
\label{2026-04-16-ex}
Let \(n \ge 3\) and let \(\alpha=\{n\}\). Define
\[
p:=\left(\frac{n-2}{n-1}\right)^{n-1}, 
\qquad
t:=\frac{1-p}{n-1},
\qquad
s^2:=\frac{1-t(n-2)}{n-2}
=\frac{1+(n-2)p}{(n-1)(n-2)}.
\]
Set
\[
B:=I_{n-1}-tJ_{n-1},
\]
and consider the matrix
\[
A=
\begin{pmatrix}
B & s\mathbf{1}\\
s\mathbf{1}^\top & 1
\end{pmatrix}
\in\mathcal S^n,
\]
where \(\mathbf{1}\in\mathbb R^{\,n-1}\) is the all-ones vector. We claim that $A \in S^+_{n,n-1}$ and that \eqref{2026-04-15-ineq} holds with equality.

Indeed, note that $B$ has eigenvalues
$1$  with multiplicity $n-2$, and $1 - t(n-1) = p > 0$, so $B$ is positive definite. 
Hence, the principal submatrix obtained by deleting the {last} row and column is positive definite.

Now let \(i\in\{1,\dots,n-1\}\). Deleting the \(i\)th row and column of \(A\) yields a principal \((n-1)\times(n-1)\) submatrix permutation-similar to
\[
M=
\begin{pmatrix}
I_{n-2}-tJ_{n-2} & s\mathbf{1}\\
s\mathbf{1}^\top & 1
\end{pmatrix}.
\]
Since \(I_{n-2}-tJ_{n-2}\succ0\),  the Schur complement of \(I_{n-2}-tJ_{n-2}\) in \(M\) is
\[
1-s^2\mathbf{1}^\top (I_{n-2}-tJ_{n-2})^{-1}\mathbf{1}
=
1-s^2\frac{n-2}{1-t(n-2)}
=0,
\]
by the definition of \(s^2\). Hence \(M\succeq0\). Therefore every principal \((n-1)\times(n-1)\) submatrix of \(A\) is positive semidefinite, and so $A\in S^+_{n,n-1}.$ Next, since \(\alpha=\{n\}\), we have
$\det(A[\alpha])=1$ and $\det(A[\alpha^c])=\det(B)=p.$

Finally, by the block determinant formula,
\[
\det(A)
=\det(B)\Bigl(1-s^2\mathbf{1}^\top B^{-1}\mathbf{1}\Bigr).
\]
Since \(B\mathbf{1}=p\mathbf{1}\), we get
\[
\mathbf{1}^\top B^{-1}\mathbf{1}=\frac{n-1}{p}.
\]
Therefore
\[
\det(A)
=
p\left(1-\frac{(n-1)s^2}{p}\right) = p - \frac{1+(n-2)p}{n-2} = -\frac{1}{n-2}.
\]
Recalling the definition of $p$, it follows that
\[
\det(A)
=
-\frac{1}{n-2}
\left(\frac{n-1}{n-2}\right)^{n-1}
\det(A[\alpha])\det(A[\alpha^c]).
\]
Thus \eqref{2026-04-15-ineq} holds with equality.
\end{ex}


\section{Extended Koteljanskii Inequality}

In this section, we derive a Koteljanskii-type inequality for matrices in $S^{+}_{n,n-1}$. The classical Koteljanskii's inequality, \eqref{eq:koteljanskii}, holds for positive semidefinite matrices. However, when $\det (A[\alpha\cup\beta]) < 0$ and all proper principal submatrices are positive definite, the left-hand side of \eqref{eq:koteljanskii} becomes negative. Consequently, although inequality~\eqref{eq:koteljanskii} remains valid, it is no longer informative in this setting.
The next result provides a suitable extension of inequality~\eqref{eq:koteljanskii}.

\begin{thm}[Extended Koteljanskii inequality]
\label{Extended Koteljanskii inequality}
Let $A \in \mathcal{S}^n$, and let $\alpha, \beta \subseteq \{1,\dots,n\}$, with $r:=|(\alpha \cup \beta) \setminus (\alpha \cap \beta)| \ge 3$ {and $m:=|\alpha \cup \beta|$. If $A[\alpha \cup \beta] \in S^{+}_{m,m-1}$}, then
\begin{equation}\label{ext-kot-equiv}
\det (A[\alpha \cup \beta]) \det (A[\alpha \cap \beta])
\ge 
-\frac{1}{r-2}\left(\frac{r-1}{r-2}\right)^{r-1}
\det (A[\alpha]) \det (A[\beta]).
\end{equation}
{The inequality is sharp when $A[\alpha \cup \beta] \in S^{++}_{m,m-1}$.}
\end{thm}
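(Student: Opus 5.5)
The plan is to reduce the statement to the Extended Fischer inequality (Theorem~\ref{2026-04-05-thm}) by taking a Schur complement with respect to $A[\gamma]$, where $\gamma:=\alpha\cap\beta$.

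\emph{The positive definite case.} Assume first that $M:=A[\alpha\cup\beta]\in S^{++}_{m,m-1}$, and write $\alpha':=\alpha\setminus\gamma$ and $\beta':=\beta\setminus\gamma$. These sets are disjoint, $\alpha'\cup\beta'=(\alpha\cup\beta)\setminus\gamma$, and $|\alpha'\cup\beta'|=r\ge 3$.

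\begin{enumerate}
\item If $\gamma=\emptyset$, then $r=m$, $\det(A[\gamma])=1$, and $\alpha,\beta$ are complementary index sets in $M$. The claim is then exactly Theorem~\ref{2026-04-05-thm} applied to $M$.
\item If $\gamma\neq\emptyset$, then $\gamma$ is a proper subset of $\alpha\cup\beta$, because $r\ge 3$. Hence $A[\gamma]$ is a principal submatrix of some $(m-1)\times(m-1)$ principal submatrix of $M$, so it is positive definite. I would then form the $r\times r$ Schur complement
\[
S:=M/A[\gamma],
\]
indexed by $\alpha'\cup\beta'$.
\end{enumerate}

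\emph{Key steps for $S$.}
\begin{enumerate}
\item By the quotient property of Schur complements, every principal submatrix $S[\delta]$ with $\delta\subseteq\alpha'\cup\beta'$ equals $M[\gamma\cup\delta]/A[\gamma]$. This gives
\[
\det M=\det A[\gamma]\,\det S,\qquad
\det A[\alpha]=\det A[\gamma]\,\det S[\alpha'],\qquad
\det A[\beta]=\det A[\gamma]\,\det S[\beta'].
\]
\item $S\in S^{++}_{r,r-1}$. Deleting one index $i\in\alpha'\cup\beta'$ from $S$ gives $M(i)/A[\gamma]$. Here $M(i)$ is a proper principal submatrix of $M$ containing $\gamma$, so it is positive definite, and the Schur complement of a positive definite matrix is positive definite. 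Also $\det S=\det M/\det A[\gamma]<0$.
\item Apply Theorem~\ref{2026-04-05-thm} to $S$ with the index set $\alpha'$, whose complement is $\beta'$. This yields
\[
\det S> -\tfrac{1}{r-2}\left(\tfrac{r-1}{r-2}\right)^{r-1}\det S[\alpha']\det S[\beta'].
\]
This is strict by Theorem~\ref{2026-04-04-thm1} in the positive definite case.
\item Multiply through by $\det(A[\gamma])^2>0$ and use the identities in the first step. This gives \eqref{ext-kot-equiv} with strict inequality.
\end{enumerate}
The degenerate cases $\alpha'=\emptyset$ or $\beta'=\emptyset$ need no separate argument: they are covered by the convention $\det S[\emptyset]=1$, since $\det S\ge c\det S$ holds when $\det S<0$ and $c\ge1$.

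\emph{The semidefinite case.} For $A[\alpha\cup\beta]\in S^{+}_{m,m-1}$, I would perturb to $A+\varepsilon I$, exactly as in the proofs of Theorems~\ref{2026-04-04-thm1} and~\ref{2026-04-05-thm}. For small $\varepsilon>0$ the perturbed principal submatrix lies in $S^{++}_{m,m-1}$, because all of its proper principal submatrices become positive definite and its determinant stays negative by continuity. Applying the positive definite case and letting $\varepsilon\to0^+$ gives the non-strict inequality, since every determinant involved is continuous in $\varepsilon$.

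\emph{Main obstacle.} The only delicate point is verifying that $S$ genuinely lies in $S^{++}_{r,r-1}$. This requires the quotient formula for Schur complements of nested principal submatrices, together with the observation that each $(r-1)$-subset of $\alpha'\cup\beta'$, joined with $\gamma$, gives a proper principal submatrix of $M$.
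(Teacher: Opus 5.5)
Your main argument matches the paper's proof. It forms the Schur complement $S=A[\alpha\cup\beta]/A[\alpha\cap\beta]$ and uses the Crabtree--Haynsworth quotient formula to get $S[\tau]=A[\gamma\cup\tau]/A[\gamma]$ and $S\in S^{++}_{r,r-1}$. It then applies the extended Fischer inequality to $S$ with the partition $(\alpha\setminus\beta,\beta\setminus\alpha)$, multiplies by $\det(A[\gamma])^2$, and finishes with the perturbation $A+\varepsilon I$. Treating $\gamma=\emptyset$ separately is harmless; the paper absorbs it into the convention $A[\emptyset]=1$.

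The one claim that goes beyond the paper is wrong. If $\alpha'=\emptyset$, the Fischer bound you need is $\det S\ge -c\,\det S[\emptyset]\det S[\beta']=-c\det S$, where $c=\frac{1}{r-2}\left(\frac{r-1}{r-2}\right)^{r-1}>0$. Since $\det S<0$, the right-hand side is positive, so this inequality is false. You dropped the minus sign; separately, $c\ge 1$ also fails for $r\ge 6$, e.g.\ $c=3125/4096$ at $r=6$.

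This cannot be repaired, because the statement itself fails in these cases. Suppose $\alpha\subseteq\beta$ (or $\beta\subseteq\alpha$) and $A[\alpha\cup\beta]\in S^{++}_{m,m-1}$. Then the left side $\det A[\beta]\det A[\alpha]$ is negative, while the right side $-c\det A[\alpha]\det A[\beta]$ is positive. For instance, take $\alpha=\emptyset$, $\beta=\{1,2,3\}$ and $A=\frac74 I_3-\frac34 J_3$, which lies in $S^{++}_{3,2}$. The left side is $-\frac{49}{32}$ and the right side is $4\cdot\frac{49}{32}=\frac{49}{8}$.

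So the theorem must be read with $\alpha\setminus\beta$ and $\beta\setminus\alpha$ both nonempty. The same restriction applies to the extended Fischer inequality: its proof needs $A[\alpha]$ and $A[\alpha^c]$ positive definite, i.e.\ $\emptyset\neq\alpha\neq\{1,\dots,n\}$. The paper's argument tacitly uses exactly this. Replace your degenerate-case remark with that hypothesis, and the rest of your argument is complete.
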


\begin{proof}
{Suppose that $A[\alpha \cup \beta] \in S^{++}_{m,m-1}$.}
Let $\gamma = \alpha \cap \beta,  \omega = \alpha \cup \beta$, 
$\mu=\alpha\setminus\beta$, and  $\nu=\beta\setminus\alpha$.
Then $\omega=\gamma\cup\mu\cup\nu,$
where \(\gamma,\mu,\nu\) are pairwise disjoint, and $r=|\omega\setminus\gamma|=|\mu|+|\nu|.$

By hypothesis, \(\det {(A[\omega])}<0\), and every proper principal submatrix of
\(A[\omega]\) is positive definite. In particular, since \(\gamma\subsetneq \omega\), the
principal submatrix \(A[\gamma]\) is positive definite. Therefore,  the Schur complement of
\(A[\gamma]\) in \(A[\omega]\) is well-defined. 

Write \(A[\omega]\) in block form according to the partition
\(\omega=\gamma\cup (\mu\cup\nu)\):
\[
A[\omega]=
\begin{pmatrix}
A[\gamma] & A[\gamma,\mu\cup\nu]\\
A[\gamma,\mu\cup\nu]^T & A[\mu\cup\nu]
\end{pmatrix}.
\]
Let
\[
S:=A[\omega]/A[\gamma]
=
A[\mu\cup\nu]-A[\gamma,\mu\cup\nu]^T\,A[\gamma]^{-1}\,A[\gamma,\mu\cup\nu] \in {\cal S}^r,
\]
since \(|\mu\cup\nu|=r\).  Using the well-known determinant formula for Schur complements, we have 
\begin{equation*}\label{det-schur}
\det {(S)}=\frac{\det {(A[\omega])}}{\det {(A[\gamma])}}.
\end{equation*}
Since \(A[\gamma]\) is positive definite, and 
\(\det {(A[\omega])}<0\), it follows that
\[
\det {(S)}<0.
\]

We now relate principal submatrices of \(S\) to Schur complements of
principal submatrices of \(A\). We use the quotient property of Schur
complements due to Crabtree and Haynsworth
\cite{CrabtreeHaynsworth1969}: if \(B\) is a nonsingular principal
submatrix of \(A\) and \(C\) is a nonsingular principal submatrix of
\(B\), then \(B/C\) is a principal submatrix of \(A/C\) and
\[
A/B=(A/C)/(B/C).
\]

Since $A[\gamma]$ is nonsingular, we may apply the quotient property for Schur complements to the matrices
\[
A \to A[\omega], \qquad B = A[\gamma\cup\tau], \qquad C = A[\gamma],
\]
where $\tau \subseteq \mu \cup \nu$. This yields
\begin{equation*}
A[\omega]/A[\gamma\cup\tau]
=
\bigl(A[\omega]/A[\gamma]\bigr)
\Big/
\bigl(A[\gamma\cup\tau]/A[\gamma]\bigr).
\end{equation*}

Moreover, the quotient property also implies that $A[\gamma\cup\tau]/A[\gamma]$ is a principal submatrix of $A[\omega]/A[\gamma]$. Indeed, after eliminating the indices in $\gamma$ from $\gamma\cup\tau$, the remaining index set is precisely $\tau$. Therefore,
\[
A[\gamma\cup\tau]/A[\gamma]
=
\bigl(A[\omega]/A[\gamma]\bigr)[\tau] =
S[\tau],
\]
recalling that $S = A[\omega]/A[\gamma]$. 
Applying the last equation with \(\tau=\mu\) and \(\tau=\nu\), and
noting that \(\gamma\cup\mu=\alpha\) and \(\gamma\cup\nu=\beta\),
gives
\[
S[\mu]=A[\alpha]/A[\gamma],\qquad
S[\nu]=A[\beta]/A[\gamma].
\]
Taking determinants,
\begin{align}
\label{2026-04-15-ident}
\det {(S[\mu])}=\frac{\det {(A[\alpha])}}{\det {(A[\gamma])}},
\qquad
\det {(S[\nu])}=\frac{\det {(A[\beta])}}{\det {(A[\gamma])}}.
\end{align}

Next, we show that every proper principal submatrix of \(S\) is positive
definite. Let \(\tau\subsetneq\mu\cup\nu\). Then
\[
S[\tau]=A[\gamma\cup\tau]/A[\gamma].
\]
Since \(\gamma\cup\tau\subsetneq\omega\), the matrix \(A[\gamma\cup\tau]\)
is a proper principal submatrix of \(A[\omega]\) and hence is positive
definite by assumption. It is well-known that the Schur complement with respect to  any principal submatrix of a positive definite
matrix remains positive definite.
Thus, every proper principal submatrix of \(S\) is positive definite.

Partition \(S\) according to \(\mu\cup\nu\):
\[
S=
\begin{pmatrix}
S[\mu] & S[\mu,\nu]\\
S[\nu,\mu] & S[\nu]
\end{pmatrix}.
\]

Since {\(S\in{\cal S}^{++}_{r,r-1}\), \(r\ge 3\), Theorem~\ref{2026-04-05-thm}}
implies that
\[
\det {(S)}>
-\frac{1}{r-2}
\left(\frac{r-1}{r-2}\right)^{r-1}
\det {(S[\mu])}\det {(S[\nu])}.
\]

Substituting identities \eqref{2026-04-15-ident} above, gives
\[
\frac{\det {(A[\omega])}}{\det {(A[\gamma])}}
>
-\frac{1}{r-2}
\left(\frac{r-1}{r-2}\right)^{r-1}
\frac{\det {(A[\alpha])}}{\det {(A[\gamma])}}
\frac{\det {(A[\beta])}}{\det {(A[\gamma])}}.
\]
Multiplying both sides by \((\det {(A[\gamma])})^2>0\) gives {the strict inequality in} \eqref{ext-kot-equiv}.

{Suppose  $A[\alpha \cup \beta] \in S^{+}_{m,m-1}$.}  For $\varepsilon>0$, define $A_\varepsilon:=A+\varepsilon I_n.$
Then $A_\varepsilon[\omega]=A[\omega]+\varepsilon I_{|\omega|}.$
Since every proper principal submatrix of $A[\omega]$ is positive semidefinite,
every proper principal submatrix of $A_\varepsilon[\omega]$ is positive definite.
Moreover, since  and the map 
$\varepsilon\mapsto \det (A[\omega]+\varepsilon I_{|\omega|})$
is continuous and $\det (A[\omega])<0$, we have 
$\det (A_\varepsilon[\omega])<0$ for all  $\varepsilon> 0$ close to zero. 
Applying the positive definite version to $A_\varepsilon$, we obtain
\[
\det (A_\varepsilon[\alpha \cup \beta]) \det (A_\varepsilon[\alpha \cap \beta])
>
-\frac{1}{r-2}\left(\frac{r-1}{r-2}\right)^{r-1}
\det (A_\varepsilon[\alpha]) \det (A_\varepsilon[\beta]).
\]
Taking the limit as $\epsilon \to 0^+$ proves the theorem.
\end{proof}

\begin{ex}
We now present an explicit example illustrating Theorem~\ref{Extended Koteljanskii inequality}. 
Let
\[
A=(1-c)I_6+cJ_6,
\mbox{ where } c=-\frac14,
\]
where \(J_6\) denotes the \(6\times 6\) all-ones matrix. Explicitly,
\[
A=
\begin{pmatrix}
1 & -\frac14 & -\frac14 & -\frac14 & -\frac14 & -\frac14\\
-\frac14 & 1 & -\frac14 & -\frac14 & -\frac14 & -\frac14\\
-\frac14 & -\frac14 & 1 & -\frac14 & -\frac14 & -\frac14\\
-\frac14 & -\frac14 & -\frac14 & 1 & -\frac14 & -\frac14\\
-\frac14 & -\frac14 & -\frac14 & -\frac14 & 1 & -\frac14\\
-\frac14 & -\frac14 & -\frac14 & -\frac14 & -\frac14 & 1
\end{pmatrix}.
\]

For a matrix of the form \((1-c)I_m+cJ_m\), the eigenvalues are
\[
1-c \quad \text{with multiplicity } m-1,
\qquad
1+(m-1)c \quad \text{with multiplicity } 1.
\]
Hence the eigenvalues of \(A\) are
\[
\frac54 \quad \text{with multiplicity } 5,
\qquad
1+5\!\left(-\frac14\right)=-\frac14 \quad \text{with multiplicity } 1.
\]
Therefore
\[
\det (A)<0.
\]

Now let \(B\) be any proper principal submatrix of \(A\), say of order \(k\le 5\).
Then \(B\) has the same form \((1-c)I_k+cJ_k\), and hence its eigenvalues are
\[
\frac54 \quad \text{with multiplicity } k-1,
\qquad
1+(k-1)\!\left(-\frac14\right)=1-\frac{k-1}{4}
\quad \text{with multiplicity } 1.
\]
Since \(k\le 5\), we have
\[
1-\frac{k-1}{4}\ge 1-\frac44=0.
\]
Moreover, if \(k\le 5\) is proper, then in fact \(k\le 5\) and for \(k<6\) we get
\[
1-\frac{k-1}{4}>0.
\]
Thus every proper principal submatrix of \(A\) is positive definite.

Choose
\[
\alpha=\{1,2,3,4\},
\qquad
\beta=\{3,4,5,6\}.
\]
Then
\[
\alpha\cap\beta=\{3,4\},
\qquad
\alpha\cup\beta=\{1,2,3,4,5,6\},
\]
so that
\[
r=|(\alpha\cup\beta)\setminus(\alpha \cap \beta)|=4.
\]

For any principal submatrix of order \(k\), the determinant is
\[
(1-c)^{k-1}(1+(k-1)c)
=
\left(\frac54\right)^{k-1}\!\left(1-\frac{k-1}{4}\right).
\]
Therefore
\[
\det A[\gamma]
=
\frac54\cdot \frac34
=
\frac{15}{16},
\]
since \(|\alpha\cap\beta|=2\),
\[
\det (A[\alpha])=\det (A[\beta])
=
\left(\frac54\right)^3\!\left(\frac14\right)
=
\frac{125}{256},
\]
since \(|\alpha|=|\beta|=4\), and
\[
\det (A[\alpha\cup\beta])=\det (A)
=
\left(\frac54\right)^5\!\left(-\frac14\right)
=
-\frac{3125}{4096}.
\]

Since \(r=4\), we have
\[
\frac{1}{r-2}\left(\frac{r-1}{r-2}\right)^{r-1}
=
\frac12\left(\frac32\right)^3
=
\frac{27}{16}.
\]
Hence
\[
\det (A[\alpha\cup\beta]) \det (A[\alpha\cap\beta])
=
-\frac{3125}{4096}\cdot \frac{15}{16}
=
-\frac{46875}{65536},
\]
while
\[
-\frac{27}{16}\det (A[\alpha]) \det (A[\beta])
=
-\frac{27}{16}\left(\frac{125}{256}\right)^2
=
-\frac{421875}{1048576}.
\]
Therefore
\[
\det (A[\alpha\cup\beta]) \det (A[\alpha\cap\beta])
>
-\frac{1}{r-2}\left(\frac{r-1}{r-2}\right)^{r-1}
\det (A[\alpha] ) \det (A[\beta]).
\]
Thus \eqref{ext-kot-equiv} holds in this example.
\end{ex}

\begin{ex}[Sharpness]
Let $r \ge 3$, $n=r$, and take
\[
\alpha=\{1\}, \qquad \beta=\{2,\dots,r\}.
\]
Then $\alpha\cap\beta=\varnothing$, $\alpha\cup\beta=\{1,\dots,r\}$, and Theorem
\ref{Extended Koteljanskii inequality} reduces to Theorem~\ref{2026-04-05-thm}.
The extremal matrix from {Example~\ref{2026-04-16-ex}} shows that the constant in Theorem \ref{Extended Koteljanskii inequality} is sharp.
\end{ex}

\section*{Acknowledgments}

S.M.\ Fallat is supported in part by an NSERC Discovery Research Grant, Application No.: RGPIN-2025-05272.
 The work of the PIMS Postdoctoral Fellow S.\ Mondal leading to this publication was supported in part by the Pacific Institute for the Mathematical Sciences.  H.\ Sendov is supported in part by an NSERC Discovery Research Grant, Application No.: RGPIN-2020-06425.

\end{document}